\newcommand\cut{\setminus\!\setminus}
\newcommand\wt{\widetilde}
\newcommand\wh{\widehat}
\newcommand\Z{\mathbb{Z}}
\newcommand\bbm{\begin{bmatrix}}
\newcommand\ebm{\end{bmatrix}}
\newcommand\red[1]{\color{red}#1\color{black}}
\newcommand\white[1]{\color{white}#1\color{black}}
\newcommand\FG[1]{\color{ForestGreen}#1\color{black}}
\newcommand\violet[1]{\color{Violet}#1\color{black}}
\newcommand\LG[1]{\color{gray}#1\color{black}}
\newcommand\DG[1]{\color{darkgray}#1\color{black}}
\newcommand\Cyan[1]{\color{Cyan}#1\color{black}}
\newcommand\bs{\boldsymbol}
\newcommand\MobPos{\raisebox{-2pt}{\includegraphics[height=11pt]{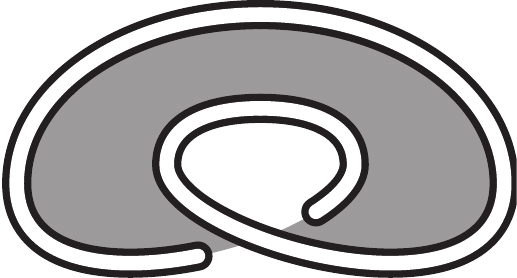}}}
\newcommand\MobNeg{\raisebox{-2pt}{\includegraphics[height=11pt]{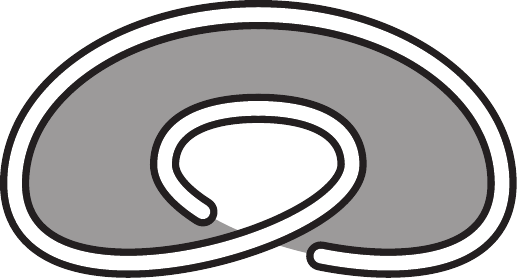}}}
\theoremstyle{plain}
\newtheorem{theorem}{Theorem}[section]
\newtheorem{obs}[theorem]{Observation}
\newtheorem{prop}[theorem]{Proposition}
\newtheorem{cor}[theorem]{Corollary}
\newtheorem*{T:Main1}{Theorem \ref{T:Main1}}
\newtheorem*{T:Main2}{Theorem \ref{T:Main2}}
\newtheorem*{P:Main}{Proposition \ref{P:Main}}
\theoremstyle{definition}
\newtheorem{definition}[theorem]{Definition}
\newtheorem{question}[theorem]{Question}
\theoremstyle{remark}
\newtheorem{rem}{Remark}
\begin{document}

\title[End-essential spanning surfaces]{End-essential spanning surfaces for links in thickened surfaces}

\author{Thomas Kindred}

\address{Department of Mathematics \& Statistics, Wake Forest University \\
Winston-Salem North Carolina, 27109} 

\email{thomas.kindred@wfu.edu}
\urladdr{www.thomaskindred.com}

%\keywords{}

%\subjclass[2010]{}

\maketitle

\begin{abstract}
Let $D$ be a cellular alternating link diagram on a closed orientable surface $\Sigma$. We prove that if $D$ has no removable nugatory crossings then each checkerboard surface from $D$ is $\pi_1$-essential and contains no essential closed curve that is $\partial$-parallel in $\Sigma\times I$. Our chief motivation comes from technical aspects of a companion paper, where we prove that Tait's flyping conjecture holds for alternating virtual links. We also describe possible applications via Turaev surfaces.
\end{abstract}

\section{Introduction}\label{S:intro}

Let $\Sigma$ be a closed orientable surface, not necessarily connected or of positive genus, and let $D\subset \Sigma$ be an alternating diagram of a link $L\subset\Sigma\times I$ such that $D$ cuts $\Sigma$ into disks; such $D$ is said to be {\bf cellular alternating}% \cite{adamsetal}
.  Then the disks of $\Sigma\cut D$ admit a checkerboard coloring, from which one can construct the {\it checkerboard surfaces} $B$ and $W$ of $D$.\footnote{We denote $I=[-1,1]$. In $\Sigma\times I$, we identify $\Sigma$ with $\Sigma\times\{0\}$ and denote $\Sigma\times\{\pm1\}=\Sigma_\pm$. Also,  $X\cut Y$ denotes $X$ cut along $Y$. Formally, this is the metric closure of $X\setminus Y$. It is homeomorphic to $X\setminus\overset{\circ}{\nu}Y$, but with extra structure from $Y$ encoded in its boundary.} 

These are {\bf spanning surfaces} for $L$: embedded compact surfaces in $\text{int}(\Sigma\times I)$ with no closed components and whose boundary is $L$. We call a spanning surface $F$ for $L$ {\bf end-essential} if it is $\pi_1$-injective and $\partial$-incompressible and contains no essential closed curve which is parallel in $(\Sigma\times I)\cut F$ to $\partial(\Sigma\times I)$. See Definition \ref{D:essential} for details.

A crossing $c$ in $D$ is {\bf removably nugatory} if there is a disk $X\subset \Sigma$ such that $\partial X\cap D=\{c\}$. In that case, one can remove $c$ from $D$ by using a flype to move $c$ across the part of $D$ in $X$ and then using a Reidemeister 1 move undo the resulting monogon. If $D$ has a removable nugatory crossing, then $B$ or $W$ is $\partial$-compressible.  Our main result is the following strong converse of this fact:%, where we call a spanning surface $F$ %
%\footnote{For a more precise characterization of end-essentiality, see Definition \ref{D:essential}.}

\begin{theorem}\label{T:Main2}
If $D\subset\Sigma$ is a cellular alternating link diagram without removable nugatory crossings, then both checkerboard surfaces from $D$ are end-essential.%
% precisely, for either $F=B,W$, denote $\Sigma\times I$ cut along $F$ by $M_F=(\Sigma\times I)\cut F$, denote the natural map $h_F:M_F\to\Sigma\times I$, and denote $h_F^{-1}(L)=\wt{L}$ and $h_F^{-1}(F)=\wt{F}$, so that $h_F:\wt{L}\to L$ is a homeomorphism and $h_F:\wt{F}\to\text{int}(F)$ is a 2:1 covering map.  The last part of the conclusion is that, for any properly embedded annulus $A\subset M_F$ with $\partial A=\gamma\sqcup\gamma'$ for circles $\gamma\subset\wt{F}$ and $\gamma'\subset f^{-1}(\Sigma\times\{\pm 1\}$, $\gamma$ bounds a disk in $\wt{F}$.}
\end{theorem}

Our proof strategy for Theorem \ref{T:Main2} is to prove the result with an extra (weak) primeness assumption on $D$ and then extend via connect sum.  We note that in some situations, the conclusion of the theorem follows from work of Ozawa \cite{oz06} and Howie \cite{howie}. 
%need to be more specific.
%need!
%There is some subtlety here, though.  To illustrate this, .  A few precise definitions are in order.

If $\gamma\subset \Sigma$ is a separating curve such that the annulus $A=\gamma\times I$ intersects the link $L\subset\Sigma\times I$ transversally in two points, then cutting $\Sigma\times I$ along $A$ and gluing on (in the natural way) two 3-dimensional 2-handles, each containing a properly embedded arc, decomposes $(\Sigma,L)$ as an {\bf annular connect sum} $(\Sigma,L)=(\Sigma_1,L_1)\#_\gamma(\Sigma_2,L_2)$. The factors $(\Sigma_i,L_i)$ are uniquely determined by $(\Sigma,L)$ and $\gamma$ up to pairwise homeomorphism (generally, however, the factors $(\Sigma_1,L_i)$ do not determine $(\Sigma,L)$ uniquely). There is also a diagrammatic version of annular connect sum.
See \cite{primes} for details.  

With this setup, if moreover $F$ spans $L$ and $|A\pitchfork F|=1$,\footnote{Here and throughout, $|X|$ denotes the number of components of $X$. The notations $|A\cap F|$ and $|A\pitchfork F|$ carry the same meaning; we use the latter notation if we wish to emphasize or clarify that $A$ and $F$ are transverse.}%} then $F$ is a {\bf boundary connect sum} $F=F_1\natural F_2$, where each $F_i$ spans $L_i$ in $\Sigma_i\times I$. Interestingly:

\begin{obs}\label{O:1}
Even if $F_i\subset\Sigma_i\times I$ is end-essential for each $i=1,2$, the surface $F_1\natural F_2\subset(\Sigma_1\#\Sigma_2)\times I$ need not be end-essential.  
\end{obs}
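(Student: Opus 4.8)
The plan is to exhibit an explicit counterexample. Take $\Sigma_1,\Sigma_2$ to be closed orientable surfaces of genus at least two and $D_1\subset\Sigma_1$, $D_2\subset\Sigma_2$ fully alternating diagrams without removable nugatory crossings, so that by Theorem \ref{T:Main2} each checkerboard surface $B_i,W_i$ is end-essential; let $F_i$ be one of these. I would then form the boundary connect sum $F=F_1\natural F_2\subset(\Sigma_1\#\Sigma_2)\times I$ along a connecting annulus $A=\gamma\times I$ with $|A\pitchfork F|=1$, arranged so that $F$ is a checkerboard surface of $D=D_1\# D_2$. The key is that $D$, while fully alternating and free of removable nugatory crossings, can fail the weak-primeness hypothesis of Theorem \ref{T:Main2}: for links (unlike knots) a connect sum can produce a disk meeting $D$ in two points that contains a crossing, and this is exactly the place where end-essentiality of $F$ can break.

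Before hunting for the failure I would record that $\pi_1$-injectivity of $F$ is automatic. By van Kampen, $\pi_1(F)=\pi_1(F_1)*\pi_1(F_2)$ maps to $\pi_1\big((\Sigma_1\#\Sigma_2)\times I\big)=\pi_1(\Sigma_1\setminus E_1)*_{\langle\gamma\rangle}\pi_1(\Sigma_2\setminus E_2)$, where $E_i$ is the disk removed from $\Sigma_i$, and this map is injective because each $\pi_1(F_i)$ meets the amalgamating subgroup $\langle\gamma\rangle$ trivially---otherwise an essential loop on $F_i$ would map to a power of $\gamma$, hence to $1$ in $\pi_1(\Sigma_i)$, contradicting $\pi_1$-injectivity of $F_i$ in $\Sigma_i\times I$. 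So $F$ can fail to be end-essential only through $\partial$-compressibility or through an essential closed curve on $F$ that is $\partial$-parallel in $(\Sigma_1\#\Sigma_2)\times I$, and I would build the example so that one of these does fail. The cleanest target is an essential simple closed curve $c\subset F$ meeting the connecting arc $\alpha=A\cap F$ in exactly two points, so that $c=c_1\cup c_2$ with each $c_i$ an essential arc on $F_i$; one then checks simultaneously that (i) $c$ is essential on $F$---the loop $c_1c_2$ is a reduced length-two word in $\pi_1(F_1)*\pi_1(F_2)$, hence non-trivial, and $c$ is not isotopic into $\partial F$---and (ii) $c$ is $\partial$-parallel in the ambient, by an explicit isotopy that uses the extra topology created at the non-prime disk of $D$ to push each half $c_i$, and then all of $c$, up onto $\Sigma_+$. (If the construction instead yields a $\partial$-compressing disk for $F$, it will likewise be supported near the non-prime disk, with a compressing arc that is essential on $F$ only because it crosses the connect-sum band.)

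The main obstacle is the tension between (i) and (ii): essentiality on $F$ wants $c$ locked inside the surface while $\partial$-parallelism wants it pushed to the boundary, and naive choices fail on one side or the other---the boundary of a regular neighborhood of $\alpha$ is $\partial$-parallel but null-homotopic, while a curve parallel to $\partial F_1$ is essential on $F$ but kept off $\Sigma_+$ by a homology obstruction. Making $c_1\cup c_2$ thread this needle forces a careful choice of $D_1,D_2$ and of the connect-sum locus, followed by a careful account of the isotopy in (ii), tracking its intersections with $A$ and with $L$; this is where essentially all the content lies. The remaining points---that $D_1,D_2$ genuinely satisfy the hypotheses of Theorem \ref{T:Main2}, and that the checkerboard colorings match up so that $F_1\natural F_2$ really is a checkerboard surface of $D_1\# D_2$---are routine.
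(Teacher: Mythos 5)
Your proposal never actually produces a counterexample; it describes the \emph{shape} a counterexample ought to have and then observes, correctly, that ``this is where essentially all the content lies.'' That is precisely the content that is missing. Observation \ref{O:1} is an existence statement: to prove it one must exhibit concrete $F_1,F_2$ and then verify that $F_1\natural F_2$ fails end-essentiality, typically by displaying an essential curve on $F_1\natural F_2$ that is visibly $\partial$-parallel. The paper does exactly this, and with smaller ingredients than you reach for: $F$ is an explicit spanning surface in $T^2\times I$ (genus one, not genus $\geq 2$), $F'$ is its mirror image, and Figure \ref{Fi:PerCom} shows a red curve on $F\natural F'\subset (T^2\# T^2)\times I$ meeting the connect-sum band and parallel to a green curve on $\Sigma_+$. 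Your outline is pointing at the same kind of picture, but without the picture there is no proof.

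Two smaller remarks. First, the digression establishing $\pi_1$-injectivity of $F$ is unnecessary --- Proposition \ref{P:Main}, stated just below the observation, already gives that $F_1\natural F_2$ is essential (incompressible and $\partial$-incompressible) whenever the $F_i$ are, so the only thing that \emph{can} fail is end-incompressibility, and you may as well aim directly at a $\partial$-parallel essential curve rather than also entertaining a $\partial$-compressing disk. Second, the van Kampen argument as written has a gap: end-essentiality gives $\pi_1$-injectivity of $F_i$ into $(\Sigma_i\times I)\setminus L_i$, not into $\Sigma_i\times I$, so the step ``maps to $1$ in $\pi_1(\Sigma_i)$, contradicting $\pi_1$-injectivity'' does not immediately contradict anything --- a loop can die in $\pi_1(\Sigma_i\times I)$ while surviving in the link complement. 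You would need to route the argument through the link complement (or simply cite Proposition \ref{P:Main}). Neither of these is fatal in itself, but the central gap --- no explicit example --- is.
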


\begin{figure}
\begin{center}
\includegraphics[width=\textwidth]{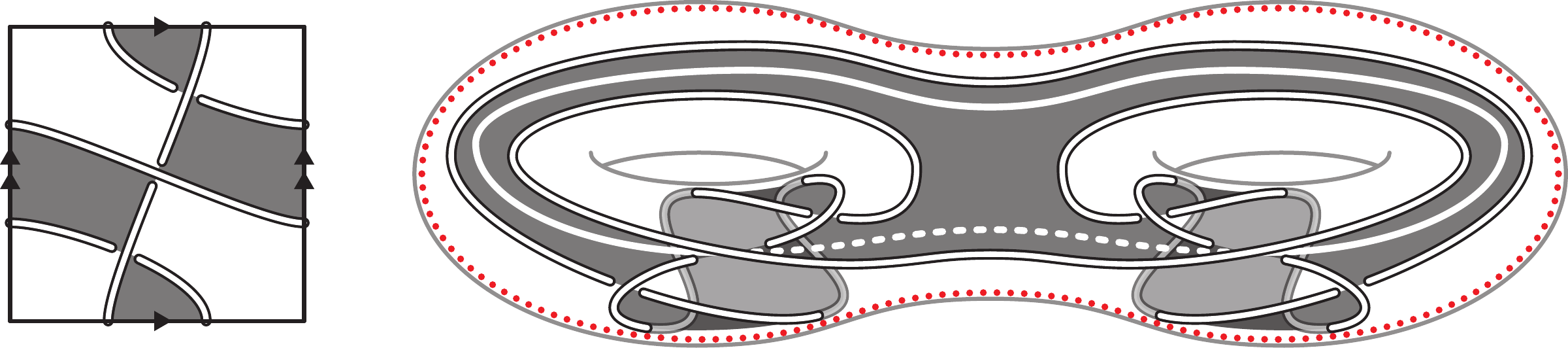}
\caption{The surface $F$ (left) is end-essential, as is its mirror image $F'$, but not $F\natural F'$ (right).}
\label{Fi:PerCom}
\end{center}
\end{figure}

Indeed, consider the example shown in Figure \ref{Fi:PerCom}. The surface $F\subset T^2\times I$ shown left is end-essential (this will follow from Theorem \ref{T:Main1}).  So too is its mirror image $F'\subset T^2\times I$.  Yet, as shown right in the figure, $F\natural F'$ is not end-essential in $(T^2\#T^2)\times I$: the white
curve on $F\natural F'$ is parallel to the dotted curve on $\Sigma_+$. We note that this behavior is related to the following phenomenon in the classical setting:

\begin{obs}\label{O:2}
Even if $F_i\subset S^3$ is $\pi_1$-injective for $i=1,2$, the surface $F_1\natural F_2\subset S^3$ need not be $\pi_1$-injective.
\end{obs}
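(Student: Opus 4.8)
The plan is to exhibit a counterexample built from spanning surfaces of the unknot. Let $U\subset S^3$ be an unknot and let $M\subset S^3$ be a M\"obius band spanning $U$ with a single half-twist; thus $\pi_1(M)\cong\Z$ is generated by the (unknotted) core $c$, and $\lk(c,U)=\pm1$. Take $F_1=F_2=M$. Then $F:=F_1\natural F_2$ spans $U\# U=U$ and, being non-orientable with Euler characteristic $-1$ and one boundary circle, is a once-punctured Klein bottle, so $\pi_1(F)$ is free of rank $2$. First I would note that $M$ is incompressible: its inclusion into the complementary solid torus $S^3\setminus U$ is $\pi_1$-injective, since it carries the generator $c$ to $\lk(c,U)=\pm1$, a generator of $\pi_1(S^3\setminus U)\cong\Z$, and any surface with $\pi_1$-injective inclusion has no compressing disk (the boundary of one would be an essential, hence nontrivial, loop on $M$ killed in the complement). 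By contrast $\pi_1(F)$, being free of rank $2$, does not inject into $\Z$, which is exactly what allows $F$ to be compressible.

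The substance is in showing $F$ itself is compressible --- and here one must resist the temptation to $\partial$-compress $F$ across the decomposing sphere of the connect sum, which only yields \emph{$\partial$-}compressibility. Instead I would cap off: choose a disk $D\subset S^3$ with $\partial D=\partial F=U$ and isotope $D$ rel boundary to minimize $|D\cap\operatorname{int}F|$, which (since $\partial D\subset\partial F$) consists only of circles. If there were none, then $D\cup F$ would be a closed Klein bottle embedded in $S^3$ --- impossible, as $S^3$ contains no closed non-orientable surface. Hence at least one intersection circle survives, and at a minimizing position every such circle must be essential on $F$, since an inessential one could be eliminated by the standard innermost-disk/irreducibility isotopy. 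An innermost intersection circle on $D$ then bounds a subdisk of $D$ meeting $F$ only along its boundary --- a compressing disk for $F$.

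I expect the only genuinely load-bearing ingredient to be the fact that $S^3$ contains no closed non-orientable surface; everything else is the usual minimization bookkeeping, and the one point to stay honest about is that we must produce an actual compressing disk, not merely note the failure of $\pi_1$-injectivity or settle for $\partial$-compressibility. (As a variant one can instead take $F_2$ to be the mirror image of $M$: then the curve on $F$ parallel to $\partial F$ has linking number $(+1)+(-1)=0$ with $U$ and cobounds an untwisted annulus with $U$ inside $F$, so it visibly bounds a disk in $S^3$ disjoint from $U$, which an innermost-disk argument again promotes to a compressing disk.) Finally I would remark that this mirrors Observation~\ref{O:1}: the closed Klein bottle one would obtain by capping $F$ with $D$ is the $S^3$-analogue of the $\partial$-parallel annulus that obstructs end-essentiality there.
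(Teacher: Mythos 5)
Your proposal is correct and uses the same underlying example as the paper: M\"obius bands spanning unknots, whose boundary connect sum is a once-punctured Klein bottle whose free rank-$2$ fundamental group cannot inject into $\pi_1(S^3\setminus U)\cong\Z$. The paper stops there, since under Definition~\ref{D:essential} incompressibility is \emph{defined} so as to coincide with $\pi_1$-injectivity (see the footnote to part (a)); your worry that ``we must produce an actual compressing disk, not merely note the failure of $\pi_1$-injectivity'' is therefore misplaced in the paper's framework, though it would be a legitimate concern under a purely geometric definition of incompressibility. Your second paragraph, which caps $F$ with a disk $D$ bounded by $U$ and observes that $|D\cap\operatorname{int}F|=0$ would yield a closed non-orientable surface embedded in $S^3$, is a nice self-contained way to exhibit an explicit compressing disk without invoking the loop theorem or the paper's double-cover machinery; it is correct, with the usual innermost-circle bookkeeping to ensure the circle you get is essential on $F$. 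One small numerical slip in the variant at the end: the surface framing a single half-twisted M\"obius band induces on its unknotted boundary is $\pm 2$, not $\pm 1$, so the boundary-parallel curve of $M\natural M'$ has linking number $(+2)+(-2)=0$ with $U$ rather than $(+1)+(-1)$; the conclusion (linking number zero, hence a disk in the complement after an innermost-disk cleanup) is unaffected.
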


Indeed, if $M$ and $M'$ are M\"obius bands, each spanning an unknot in $S^2\times I$, then both $M$ and $M'$ are $\pi_1$-injective (but $\partial$-compressible;
%check this:
in fact, they are the {\it only} connected spanning surfaces in $S^3$ which are $\pi_1$-injective and $\partial$-compressible), and yet $M\natural M'$ is not $\pi_1$-injective. %These two observations illustrate some of the subtlety of the second key result we will use to prove Theorem \ref{T:Main2}:

The key to dealing with the complication presented by Observation \ref{O:1} is to distinguish between annular connect sums $(\Sigma,L)=(\Sigma,L_1)\#(\Sigma,L_2)$ in general and those which are {\bf local} in the sense that one of $\Sigma_i=S^2$. Indeed, this distinction is at the heart of \cite{primes}.  

Following Howie--Purcell \cite{hp20}, we call a pair $(D,\Sigma)$  {\bf weakly prime} if $D$ is nontrivial and, for any local connect sum decomposition $(\Sigma,D)=(\Sigma,D_1)\#(S^2,D_2)$, either $D_2=\bigcirc$ is the trivial diagram of the unknot, or $(\Sigma,D_1)=(S^2,\bigcirc)$ \cite{hp20}.
Note that no weakly prime, cellular alternating link diagram with more than one crossing has any removable nugatory crossings.%\footnote{In \cite{primes}, we call $D$ {\bf prime} if, for any pairwise connect sum decomposition $(\Sigma,D)=(\Sigma_1,D_1)\#(\Sigma_2,D_2)$, one of $(\Sigma_i,D_i)=(S^2,\bigcirc)$.}

%We will see that Theorem \ref{T:Main2} is easier to prove with the added assumption that $D$ is weakly prime (we do not need the stronger assumption that $D$ is prime), and that 

Theorem \ref{T:Main2} will follow from the following two results:

\begin{theorem}\label{T:Main1}
If $D\subset\Sigma$ is a weakly prime, cellular alternating link diagram with more than one crossing, then both checkerboard surfaces from $D$ are  {end-essential}.
\end{theorem}

\begin{prop}\label{P:Main}
Suppose $(\Sigma,L)=(\Sigma_1,L_1)\#(\Sigma_2,L_2)$. If  $F=F_1\natural F_2$ spans $L$, where each $F_i$ is a $\pi_1$-essential spanning surface for $L_i$,  then $F$ is $\pi_1$-essential. If moreover $\Sigma_2=S^2$ and $F_1$ is end-essential, then $F$ is also end-essential.
\end{prop}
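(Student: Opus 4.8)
The plan is to work with the annulus $A = \gamma \times I$ that realizes the connect sum, together with the single arc of intersection $\alpha = A \pitchfork F$, and to reduce every compression, $\partial$-compression, or bad $\partial$-parallelism of $F$ to one of the two pieces $F_i$ by an innermost/outermost argument against $A$. First I would set up notation: cutting $(\Sigma,L)$ along $A$ expresses $\Sigma \times I = (\Sigma_1 \times I) \cup_A (\Sigma_2 \times I)$ (after capping the boundary spheres of the cut-open pieces with balls, in the $\Sigma_2 = S^2$ case), and $F$ cut along $\alpha$ is $F_1 \sqcup F_2$ with $\alpha$ a single arc properly embedded in $F$ running from $L$ to $L$. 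I would record that $A$ is incompressible and $\partial$-incompressible in $\Sigma \times I$ (it is a vertical annulus over a curve in $\Sigma$), and that the two points of $A \cap L$ divide $\partial A$ into arcs in $\Sigma_\pm$ and arcs in $L$.

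For the first claim ($F$ essential), I would argue contrapositive. Suppose $F$ compresses via a disk $E$ with $\partial E \subset F$ essential in $F$. Isotope $E$ transverse to $A$; then $E \cap A$ is a union of circles and arcs. Circles are removed innermost-first using incompressibility of $A$ in $\Sigma\times I$ and of $F$ — a standard exchange. Arcs of $E\cap A$ have endpoints on $\partial E \subset F$, hence on $\alpha$; an outermost such arc cuts from $E$ a disk $E'$ with $\partial E' = a \cup b$, $a \subset A$, $b \subset F$. Using that $A$ is $\partial$-incompressible/incompressible one pushes $a$ across $A$; I must check this does not create new intersections and that it strictly reduces $|E\cap A|$, so after finitely many steps $E$ is disjoint from $A$, hence lies in one $\Sigma_i \times I$, giving a compression of $F_i$ (one must verify $\partial E$ stays essential in $F_i$ — if it bounded a disk in $F_i$ it would bound one in $F$). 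The $\pi_1$-injectivity and $\partial$-incompressibility statements follow the same template: for $\pi_1$-injectivity use the loop theorem to replace a homotopically essential loop by an embedded compressing disk and run the argument above; for $\partial$-incompressibility, a $\partial$-compressing disk $E$ meets $A$ in circles and arcs, where now arcs may also have an endpoint on $\partial E \cap \Sigma_\pm$, so the outermost-arc disk $E'$ has boundary split among $A$, $F$, and $\Sigma_\pm$, and the same pushing-across-$A$ move applies; one then checks the resulting $\partial$-compression of some $F_i$ is genuine (its boundary arc on $F_i$ is not $\partial$-parallel in $F_i$).

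For the second claim assume $\Sigma_2 = S^2$, so $\Sigma_2 \times I$ is a $3$-ball $B^3$ and $F_2 \subset B^3$ is a spanning surface for a link in $B^3$ that, after filling, lives in $\Sigma = \Sigma_1$; the point is that a $\partial$-parallelism into $\Sigma_\pm$ must "use" the $\Sigma_1$ side. Suppose $c \subset F$ is an essential closed curve cobounding an annulus $R$ with a curve $c' \subset \Sigma_+$ (the $\Sigma_-$ case is symmetric), with $\mathrm{int}(R) \cap F = \varnothing$. Make $R$ transverse to $A$; $R \cap A$ is circles and arcs, arcs having endpoints on $c \subset F$ (hence on $\alpha$) or on $c' \subset \Sigma_+$ (hence on the arc $\partial A \cap \Sigma_+$). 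Remove circles innermost, then handle an outermost arc: it cuts off from $R$ either a disk or a sub-annulus; in either case one pushes across $A$ exactly as before, using that $A\cap\Sigma_+$ is a single arc. After finitely many steps $R$ is disjoint from $A$. Now $R$ lies entirely in $\Sigma_1 \times I$ or entirely in the ball $\Sigma_2\times I = B^3$. In the ball it would exhibit $c$ as $\partial$-parallel into $\partial B^3$; but $\partial B^3 \setminus (\text{capping disk}) = A$, and pushing across $A$ once more would put $R$ on the $\Sigma_1$ side — alternatively, $c \subset F_2 \subset B^3$ being peripheral and essential forces, after capping, $c$ to bound in $\Sigma_1\times I$, contradicting that $c$ is essential on $F = F_1 \natural F_2$ once we track that $c$ is isotopic in $F$ onto $F_1$ (since $\alpha$ separates $F$ into $F_1$ and a disk-with-holes piece of $F_2$). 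So $R \subset \Sigma_1 \times I$, exhibiting an essential curve on $F_1$ as $\partial$-parallel in $\Sigma_1 \times I$, contradicting end-essentiality of $F_1$.

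The main obstacle I anticipate is the bookkeeping in the "push across $A$" step for $\partial$-compressions and for $\partial$-parallelisms: one must be careful that the move (i) strictly decreases $|E \cap A|$ or $|R\cap A|$, (ii) does not introduce new intersections with $F$ or with $\Sigma_\pm$, and (iii) when an arc of intersection has one endpoint on $F$ and one on $\Sigma_\pm$, that the outermost piece really is a bigon-type disk one can slide, rather than something that forces $c'$ or the compressing arc to change homotopy class. A secondary subtlety is making precise, via Definition \ref{D:essential}, that "$c$ essential on $F_1\natural F_2$" is equivalent to "$c$ essential on $F_1$" after the isotopy pushing $c$ off the connect-sum disk — this is where the hypothesis $\Sigma_2 = S^2$ is used, since then the $F_2$-side of $\alpha$ is planar and contributes nothing essential.
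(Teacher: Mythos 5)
Your overall plan — control intersections with the connect-sum annulus $A$ and reduce any bad disk/annulus to the pieces $F_i$ — is the right plan, and it matches the paper's. But two of your key moves are not justified as you state them, and the second one is a real gap.

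\textbf{The ``push across $A$'' step.} You write that arcs of $E\cap A$ (resp.\ $R\cap A$) can be eliminated ``using that $A$ is $\partial$-incompressible/incompressible.'' That is not what removes these arcs. An outermost arc $a$ of $E\cap A$ has both endpoints on $\alpha=A\cap F$, an arc in the \emph{interior} of $A$; the subdisk of $A$ it cuts off has boundary $a\cup a'$ with $a'\subset\alpha$, not on $\partial A$. So $\partial$-incompressibility of $A$ says nothing here, and the outermost disk $E'$ need not be parallel across $A$. What the paper does instead is view $\Sigma_1\times I$ as one component of $(\Sigma\times I)\setminus\setminus A$ with a $2$-handle glued back, extend $E'$ through that $2$-handle to a disk $X''\subset\Sigma_1\times I$, and then split into cases: if $X''$ is a genuine compressing or $\partial$-compressing disk for $F_1$, essentiality of $F_1$ gives the contradiction; if $X''$ is a \emph{fake} compressing disk (boundary bounds a disk in $F_1$), then $b=\partial E'\cap F_1$ is parallel in $F_1$ to $\alpha$, and only \emph{then} can one isotope $E$ to lower $|E\cap A|$, contradicting minimality. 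Your argument hides exactly the case distinction that makes the push possible. Your parenthetical ``one must verify $\partial E$ stays essential in $F_i$'' does not patch this: whether the push is possible depends on the small arc $b$, not on the whole of $\partial E$, and you have not linked the two.

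\textbf{The end-essentiality step.} Here the gap is more serious, because you cannot in fact make $R$ disjoint from $A$. After minimization, the arcs of $N\cap A$ that remain are exactly the ones with one endpoint on $\omega\subset F$ and one on $\omega_+\subset\Sigma_+$ (the paper proves the others can be surgered away). Such a ``spanning'' arc does not cut a compressible bigon off $N$, and there is no isotopy pushing it across $A$ that reduces $|N\cap A|$ while keeping $N$ an end-annulus with the same $\omega$. So your conclusion ``after finitely many steps $R$ is disjoint from $A$'' is false in general; the paper's proof explicitly shows $N\cap A\neq\varnothing$ under the minimizing hypothesis. The paper's way out is different in kind: the spanning arcs cut $N$ into rectangles, one of which, $X'$, lies on the $\Sigma_2=S^2$ side of $A$; extending $X'$ through the $2$-handle gives an annulus $Y\subset S^2\times I$ with one boundary circle on $S^2$; cap that circle with a disk and push off to get a disk $Z$ whose boundary lies in $F_2$. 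Since $F_2$ is essential, $Z$ must be a fake compressing disk, which again lets one reduce $|N\cap A|$, contradiction. This extend-and-cap step is where the hypothesis $\Sigma_2=S^2$ is actually used; your alternative explanation (filling $B^3$ and claiming $c$ is isotopic onto $F_1$ because ``$\alpha$ separates $F$ into $F_1$ and a disk-with-holes piece of $F_2$'') is not correct — $F_2$ can carry plenty of essential topology, and $\omega$ need not be isotopable off $A$ at all.

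In short: the reduction-against-$A$ framework is right, but the mechanism you cite for removing arcs of intersection is not valid, and in the end-essential case the arcs genuinely cannot all be removed, so a different argument (the paper's extend-through-$2$-handle-and-cap construction) is needed.
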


Before presenting the proofs, we give a more precise definition of end-essentiality. Note that these properties all concern curves and arcs in $F$ which {\it may self-intersect}. There are alternative notions which do not allow such self-intersections; those notions are sometimes referred to as {\it geometric} (since they indicate whether or not certain types of surgery moves are possible on $F$) and these as {\it algebraic} (due to the equivalence between incompressibility and $\pi_1$-injectivity: see the next remark below) \cite{essence}.
%need to add reference for essence

\begin{definition}\label{D:essential}
Let $F$ be a spanning surface for a link $L$ in $\Sigma\times I$, and write $M_F=(\Sigma\times I)\cut F$.
Write $ h_F:M_F\to \Sigma\times I$ for the quotient map that reglues corresponding pairs of points from $\text{int}(F)$ in $\partial M_F$, and denote $\wt{L}={ h_F}^{-1}(L)$, $\wt{\Sigma_\pm}=h_F^{-1}(\Sigma_\pm)$, and $\wt{F}= h_F^{-1}(\text{int}(F))=\partial M_F\setminus (\wt{\Sigma_\pm}\cup \wt{L})$, so that $h_F$ restricts to a homeomorphism $M_F\setminus\wt{F}\to (\Sigma\times I)\setminus\text{int}(F)$ and to a 2:1 covering map $\wt{F}\to\text{int}(F)$.
Then we say that $F$ is:
\begin{enumerate}[label=(\alph*)]
\item {\bf incompressible} if any circle%
\footnote{We use ``circle" as shorthand for ``simple closed curve.'' A circle in a surface is {\it essential} if it does not bound a disk in that surface.}
$\gamma\subset \wt{F}$ that bounds a disk $\wt{X}\subset M_F$  %either 
also bounds a disk in $\wt{F}$. In that case, $X=h_F(X)$ is called a {\it fake compressing disk} for $F$; if $\gamma$ does not bound a disk in $\wt{F}$ then $X$ is a {\it compressing disk} for $F$.
\item {\bf end-incompressible} if 
any circle %
$\gamma\subset \wt{F}$ that is parallel through an annulus $\wt{A}$ in $M_F$ to $\wt{\Sigma_\pm}$ bounds a disk in $\wt{F}$. In that case, $A=h_F(\wt{A})$ is called a {\it fake end-annulus} for $F$; if $\gamma$ does not bound a disk in $\wt{F}$ then $A$ is an {\it end-annulus} for $F$.
\item {\bf $\bs{\partial}$-incompressible} if, for any circle
$\gamma\subset \partial M_F$ with $|\gamma\cap\wt{L}|=1$ that bounds a disk $\wt{X}$ in $M_F$, the arc $\gamma\cut\wt{L}$ is parallel in $\partial M_F\cut\wt{L}$ to $\wt{L}$. If $\gamma\cut\wt{L}$ is not so parallel, then $h_F(\wt{X})$ is a {\it $\partial$-compressing disk} for $F$.
%any properly embedded arc in $F$ that is parallel in $(\Sigma\times I)\cut(F\cup\nu L)$ to $\partial \nu L$ is also $\partial$-parallel in $F$.%
% any arcs $\alpha\subset F$ and $\beta\subset\partial\nu L\cut\partial F$ %, and $\kappa\subset\partial F$ such that $\alpha\cup\beta$ bounds a disk in $(\Sigma\times I)\cut (F\cup\nu L)$ and $\beta\cup\kappa$ bounds a disk in $\nu L$, $\alpha\cup \kappa$ bounds a disk in $F$. (This implies that $\beta\cup\kappa$ bounds a disk in $\partial \nu L$.)
\item {\bf $\pi_1$-essential} if $F$ satisfies (a) and (c).%\footnote{Equivalently, $F$ is essential if it is $\pi_1$ injective and is not a M\"obius band spanning the unknot.}
\item {\bf end-essential} if $F$ satisfies (a), (b) and (c).
%, since end-incompressibility implies incompressibility.}
\end{enumerate}
\end{definition}

\begin{rem}
In part (a) of Definition \ref{D:essential}, $F$ is incompressible if and only if $F$ is $\pi_1$-injective, meaning that inclusion $\text{int}(F)\hookrightarrow (\Sigma\times I)\setminus L$ induces an injection of fundamental groups (for all possible choices of basepoint).
\end{rem}

\begin{obs}\label{O:3}
Any end-essential surface is $\pi_1$-essential. The converse is true when each component of $\Sigma$ is a 2-sphere. %This is because any circle on $S^2\times\{\pm1\}$ bounds a disk, and so the sort of annulus in question can always be capped off to form a compressing disk.
\end{obs}

Sometimes, when considering a spanning surface $F$ for a link $L\subset\Sigma\times I$, it is convenient to cut out a regular neighborhood $\nu L$ of the link and view $F$ (which we identify with $F\cut\nu L$) as a properly embedded surface in the link exterior $E=(\Sigma\times I)\cut\nu L$.  This change of perspective affects the aspects of end-essentiality as follows.  Denote $E_F=E\cut F$ and use the natural map $g_F:E_F\to E$ to denote $\wh{\Sigma_\pm}=g_F^{-1}(\Sigma_\pm)$, $\wh{F}=g_F^{-1}(F)$, and $\wh{\partial\nu L}=g_F^{-1}(\partial\nu L)$. Then $F$ is 
\begin{enumerate}[label=(\alph*)]
%\item {\bf compressing disk} for $F$ is $g_F(X)$ for some embedded disk $X\subset E_F$ with $\partial X\subset\wh{F}$.
\item {\bf incompressible} if no essential circle 
$\gamma\subset \wh{F}$ bounds a disk in $E_F$;% (if it did bound such a disk $X$, then $g_F(X)$ would be a {\bf compressing disk} for $F$); %either 
\item {\bf end-incompressible} if no essential circle $\gamma\subset\wh{F}$ is parallel in $E_F$ to $\wh{\Sigma_\pm}$; and
\item {\bf $\bs{\partial}$-incompressible} if, for any pair of arcs $\alpha\subset\wh{\partial \nu L}$ and $\beta\subset\wh{F}$ that are parallel in $E_F$, $\alpha$ and $\beta$ are also parallel in $\wh{\partial\nu L}\cup\wh{F}$. %
%\footnote{In particular, $\alpha$ is parallel in $\wh{\partial\nu L}$ to $\partial\wh{F}=g_F^{-1}(\partial F)$, and $\beta$ is parallel in $\wh{F}$ to $\partial\wh{F}$.}%
%(if $\alpha$ and $\beta$ were not parallel in $\wh{\partial\nu L}\cup\wh{F}$, then $g_F(X)$ would be a {\bf $\partial$-compressing disk} for $F$).
\end{enumerate}

\section{Proofs of main results}\label{S:PrimeProofs}

%\subsection{Main results}\label{S:MainProofs}

\begin{proof}[Proof of Theorem \ref{T:Main1}]
Assume without loss of generality that $\Sigma$ is connected. Denote the checkerboard surfaces of $D$ by $B$ and $W$, where $W$ is the all-$A$ state surface of $D$ and $B$ is the all-$B$ state surface; equivalently, $W$ is the negative-definite surface from $D$, and $B$ is the positive-definite surface \cite{bk20}. View $B$ and $W$ as properly embedded surfaces in the link exterior $(\Sigma\times I)\cut\nu L$.  Denote $B\cap W=v$, which consists of one vertical arc at each crossing.   

Suppose first that $B$ is compressible.  Among all compressing disks for $B$, choose one, $X$, so as %lexicographically 
to minimize $%(|\partial X\pitchfork v|,
|X\pitchfork W|)%
$.
Then $X\cap W$ consists only of arcs, each with both endpoints on $v$, since any circle of $X\cap W$ would lie entirely in some disk of $W\setminus v$ and thus bound a disk in $W$; an innermost circle of $X\cap W$ in $W$ would then bound a disk $W'$ of $W\cut X$, and surgering $X$ along $W'$ would give a (sphere and a) compressing disk $X'$ for $F$ with $\partial X'=\partial X$ and $|X'\pitchfork W|<|X\pitchfork W|$, contrary to assumption.

Further, $X\cap W$ is nonempty, since $\partial X$ is essential in $B$,  $v$ cuts $B$ into disks, and each point of $\partial X\cap v$ is an endpoint of an arc of $X\cap W$.  Therefore, there are arcs $\alpha\subset \partial X\cut v$ and $\beta\subset X\cap W$ that cobound an outermost disk $X_0$ of $X\cut W$.  Because $D$ is alternating, $\partial X_0$ appears as shown left in Figure \ref{Fi:Incompressible}, contradicting the fact that $D$ is weakly prime.\footnote{Color guide for Figures \ref{Fi:Incompressible} and \ref{Fi:IncompressibleFinish}: \LG{$\nu L$ light gray}, \DG{$B$ dark gray}.} (One must slide $\partial X_0$ slightly off the crossings to see the contradiction.)

\begin{figure}
\begin{center}
\labellist
\tiny\hair 4pt
\pinlabel {$\violet{\beta}$} [b] at 32 101
\pinlabel {$\white{\alpha}$} [b] at 93 180
\pinlabel {$\violet{\beta}$} [b] at 512 142
\pinlabel {$\red{\alpha}$} [l] at 549 200
\pinlabel {$\violet{\beta}$} [b] at 748 120
\pinlabel {$\red{\alpha'}$} [l] at 743 268
\pinlabel {$\white{\alpha''}$} [l] at 798 135
\endlabellist
\includegraphics[width=\textwidth]{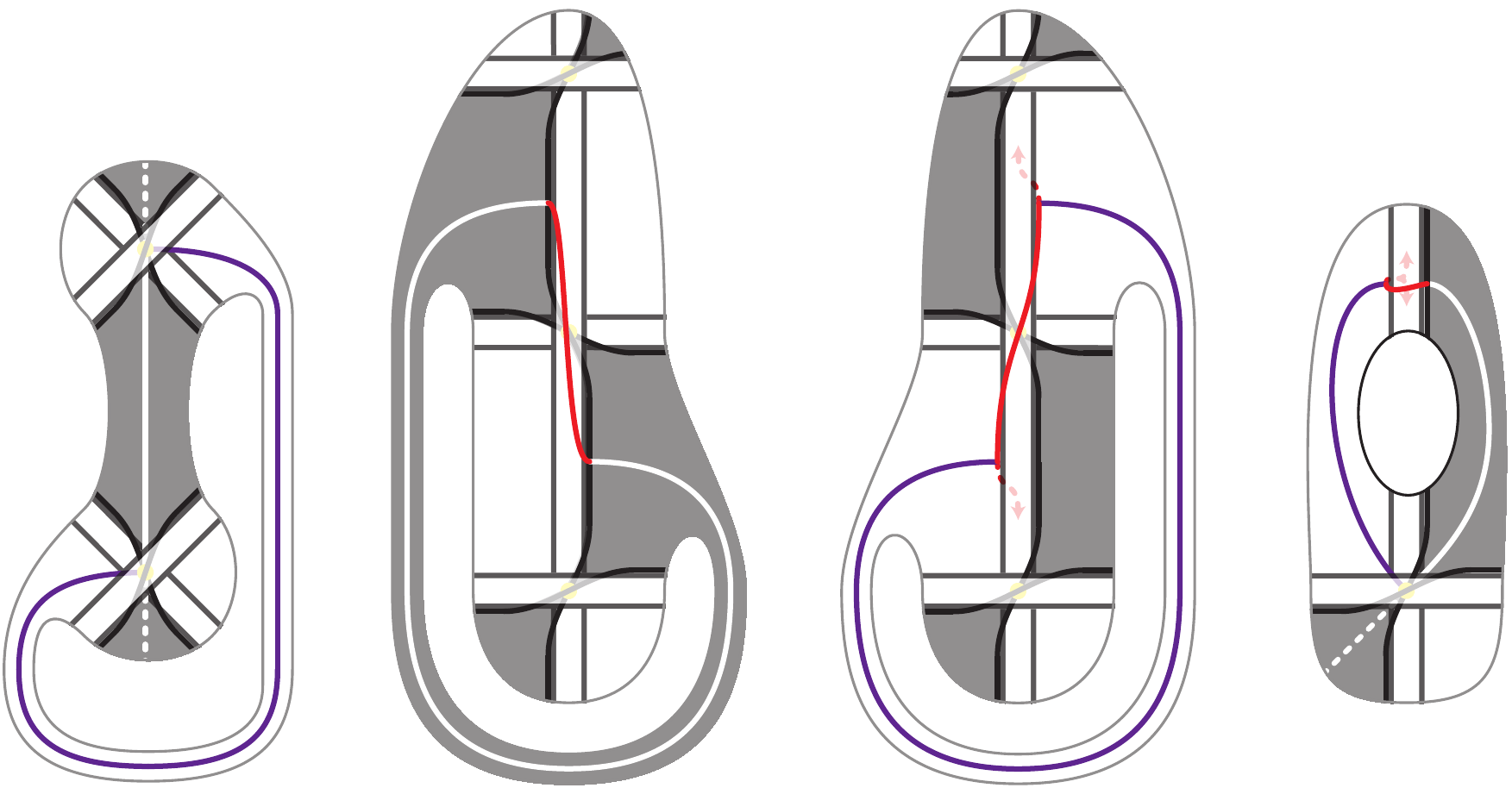}
\caption{Contradictions from the proof of Theorem \ref{T:Main1}.}
\label{Fi:Incompressible}
\end{center}
\end{figure}

Second, assume that $B$ is end-compressible.  Then without loss of generality there is an end-annulus $N$ whose boundary consists of two closed curves, $\gamma\subset B$ and $\gamma_+\subset\Sigma_+$, where $\gamma$ is essential in $B$ (possibly with self-intersections).
  Among all such end-annuli $N$, choose one which %lexicographically 
  minimizes $%(|\gamma\pitchfork v|,
  |N\pitchfork W|%)
  $. 
Then, as with $X$ and $\partial X$ before, $N\cap W$ consists entirely of arcs with both endpoints on $\gamma\cap v$, and $\gamma$ must intersect $v$. This leads to an outermost disk of $N\cut W$, giving the same contradiction as before, again using Figure \ref{Fi:Incompressible}, left.

Third, assume that $B$ is $\partial$-compressible.  Among all $\partial$-compressing disks for $B$, % 
%\footnote{Viewing $B$ as a properly embedded surface in the link exterior $E$, a $\partial$-compressing disk is a properly embedded disk $X\subset E\cut B$ whose boundary consists of an arc in $B$ and an arc in $\partial\nu L\cut B$, neither of which is parallel (in $B$ or $\partial \nu L\cut B$, respectively) to $\partial B\subset\partial\nu L$.}
choose one, $X$, so as %lexicographically 
to minimize $%(|\partial X\pitchfork v|,
|X\pitchfork W|%)
$, provided $\partial X\cap\partial v=\varnothing$. Then, for the same reasons as twice before, $X\cap W$ consists only of arcs with endpoints on either $v$ or $\partial \nu L$. Moreover, $X$ must intersect $W$, or else $\partial X$ appears as shown center-left in Figure \ref{Fi:Incompressible}, contradicting the fact that $D$ is weakly prime.

There are at least two outermost disks of $X\cut W$ and just two points of $\partial X\cap\partial B$, neither of them in $X\cap W$ (since $\partial B\cap\partial W=\partial v$ and $\partial X\cap \partial v=\varnothing$), so there is an outermost disk $X'$ of $X\cut W$ that contains at most one of the two points of $\partial X\cap \partial B$. Its boundary consists of an arc $\beta$ of $X\cap W$ and an arc $\alpha\subset\partial X$.  

\begin{figure}
\begin{center}
\includegraphics[width=.6\textwidth]{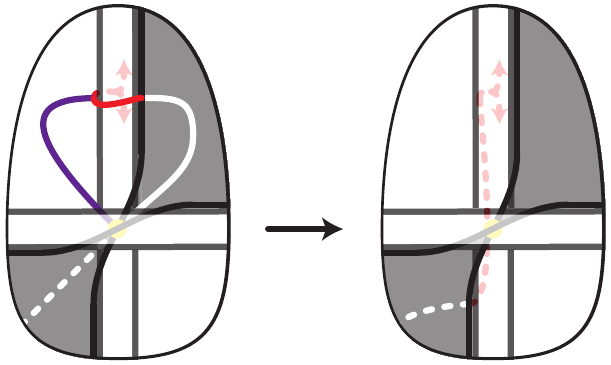}
\caption{This isotopy of $X$ decreases $|X\cap W|$.}
\label{Fi:IncompressibleFinish}
\end{center}
\end{figure}

There are three possibilities for $\alpha$, each giving a contradiction.  If both endpoints of $\alpha$ lie on $v$, then $\alpha\subset B$, giving the same contradiction (left in Figure \ref{Fi:Incompressible}) for a third time. If both endpoints of $\alpha$ lie on $\partial \nu L$, then, since $D$ is alternating, $\alpha\subset\partial\nu L$ appears as shown center-right in Figure \ref{Fi:Incompressible}, %
again contradicting the fact that $D$ is weakly prime. Finally, if one endpoint of $\alpha$ lies on $v$ and the other lies on $\partial \nu L$, then $\alpha$ consists of an arc $\alpha'\subset\partial\nu L$ and an arc $\alpha''\subset B$, and $\partial X'$ must appear as shown right in Figure \ref{Fi:Incompressible}. But then, since $D$ is weakly prime, $\partial X'$ appears as shown left in Figure \ref{Fi:IncompressibleFinish} and therefore, as shown, admits an isotopy which decreases $|X\cap W|$, contrary to assumption.

The same arguments prove that $W$, too, is end-essential.
 \end{proof}

\begin{proof}[Proof of Proposition \ref{P:Main}]
Assume that $(\Sigma,L)=(\Sigma_1,L_1)\#_\gamma(\Sigma_2,L_2)$ and $F=F_1\natural_\gamma F_2$, where both $F_1$ and $F_2$ are $\pi_1$-essential; denote the annulus $A=\gamma\times I$. %Assume without loss of generality that $L$ is nonsplit.  
%Justify

Suppose first that $F$ is compressible. Choose a compressing disk $X$ for $F$ which minimizes $|X\pitchfork  A|$.  The incompressibility of $F_1$ and $F_2$ implies that $X\cap  A\neq\varnothing$, and the minimality of $|X\cap A|$ %and the fact that $L$ is nonsplit imply 
implies that $X\cap A$ contains no circles. Hence, there is an outermost disk $X'$ of $X\cut A$; its boundary consists of an arc $\alpha$ of $X\cap  A$ and an arc $\beta\subset\partial X\cut A$.   

Assume without loss of generality that $\beta\subset F_1$. Viewing each $\Sigma_i\times I$ as a component of $(\Sigma\times I)\cut A$ with a 2-handle attached, $X'$ extends  as shown in Figure \ref{Fi:2HAttach} through this 2-handle in $\Sigma_1\times I$ to give a disk $X''\subset \Sigma_1\times I$ that is either a compressing disk, possibly fake, or a $\partial$-compressing disk for $F_1$.\footnote{Color guide for Figures \ref{Fi:2HAttach} and \ref{Fi:2HAttachEnds} in the online version: \DG{$F$ dark gray}, $L$ black and white, $\LG{X'\text{ and }X''\text{ light gray}}$, $\Cyan{A\text{ light blue}}$, $\red{\alpha \text{ red}}$, $\FG{\Sigma_\pm\text{ green}}$.}  Since $F_1$ is $\pi_1$-essential, the only possibility is that $X''$ is a fake compressing disk for $F_i$. Yet, this implies that $\beta\subset F_1$ is parallel through $F_1$ to $ A$, which contradicts the minimality of $|X\cap  A|$.  %Therefore, $F$ is incompressible.

 \begin{figure}
\begin{center}
\includegraphics[width=\textwidth]{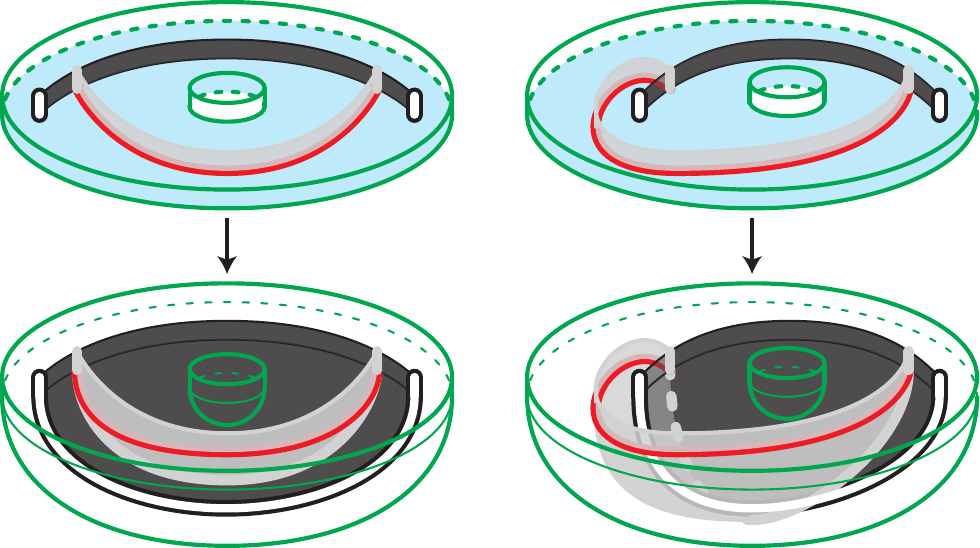}
\caption{A disk $X'$ in a compressing disk for $F_1\natural F_2$ extends through a 2-handle to give a possibly fake compressing disk or $\partial$-compressing disk for $F_1$ or $F_2$.}
\label{Fi:2HAttach}
\end{center}
\end{figure}

If $F$ is $\partial$-compressible, then we may choose a $\partial$-compressing disk $X$ for $F$ which minimizes $|X\pitchfork  A|$, provided  $\partial X\cap \partial\nu L\cap A=\varnothing$.  This last condition, with the same arguments used above, ensures that there is an outermost disk $X'$ exactly as above, the key point being that $\beta$ is disjoint from $\partial\nu L$.  This gives the same contradiction as above.

We have shown that $F$ is $\pi_1$-essential.  Now assume further that $\Sigma_2=S^2$ and $F_1$ is end-essential. Note that $F_2$ is also end-essential, by Observation \ref{O:3}. %since it is $\pi_1$-essential and $\Sigma_2=S^2$.

Suppose that $F$ is end-compressible.  Then without loss of generality there is an end-annulus $N$, whose boundary consists of two closed curves, $\omega\subset F$ and $\omega_+\subset\Sigma_+$, where $\omega$ is essential in $F$ (possibly with self-intersections).  Among all such annuli $N$, choose one which minimizes $|N\pitchfork A|$. Then $N\cap A$ is nonempty, since $F_1$ and $F_2$ are end-essential.  Further, $N\cap A$ consists only of arcs, no circles, since any circle in $A\setminus F$ bounds a disk in $(\Sigma_2\times I)\cut F=(S^2\times I)\cut F$, and surgering $N$ along such disks would contradict the minimality of $|N\pitchfork A|$ (using the fact that $F_1$ and $F_2$ are end-essential). 
Moreover, no arc of $N\cap A$ is parallel in $A\cut N$ to $\partial A$ or to $\text{int}(F)$; otherwise, there would be an outermost disk of $A\cut N$ along which to surger $N$, again contradicting minimality. 

\begin{figure}
\begin{center}
\labellist \small \hair 4pt
\pinlabel {$\alpha$} [b] at -11 142
\pinlabel {$\alpha'$} [b] at 90 142
\pinlabel {$\alpha$} [b] at 223 142
\pinlabel {$\alpha'$} [b] at 325 142
\pinlabel {$\alpha$} [b] at 547 142
\pinlabel {$\alpha'$} [b] at 658 142
\pinlabel {$\beta$} [b] at 40 175
\pinlabel {$\beta$} [b] at 275 175
\pinlabel {$\beta'$} [b] at 275 -32
\pinlabel {$\beta$} [b] at 598 175
\pinlabel {$\beta'$} [b] at 598 -32
\pinlabel {$\beta_+$} [b] at 40 74
\pinlabel {$\beta_+$} [b] at 275 74
\pinlabel {$\beta_+'$} [b] at 275 34
\pinlabel {$\beta_+$} [b] at 598 74
\pinlabel {$\beta_+'$} [b] at 598 34
\endlabellist
\includegraphics[width=\textwidth]{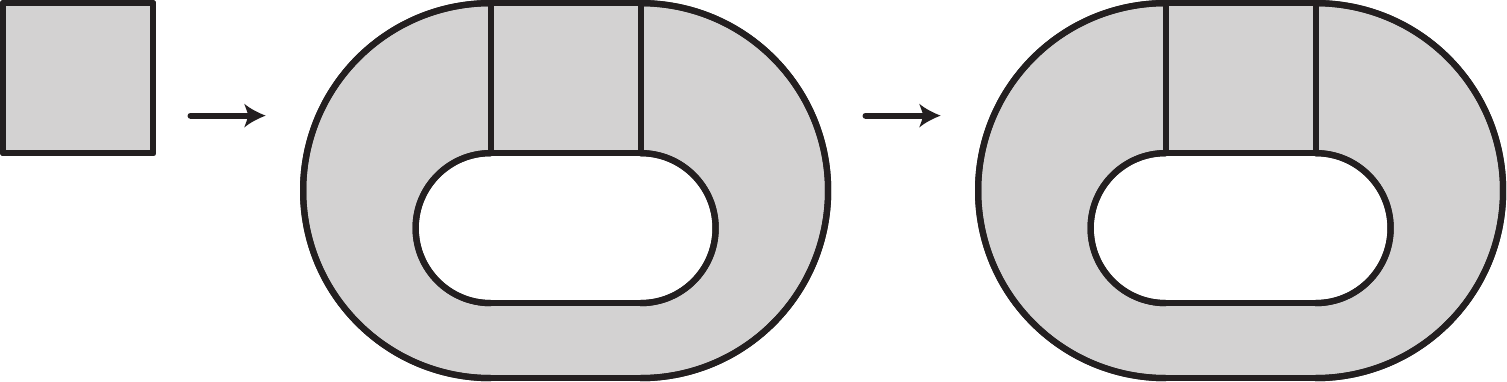}
\caption{Extending $X$ to $Y$ and capping off as $Z$}
\label{Fi:XYZ}
\end{center}
\end{figure}

\begin{figure}
\begin{center}
\includegraphics[width=\textwidth]{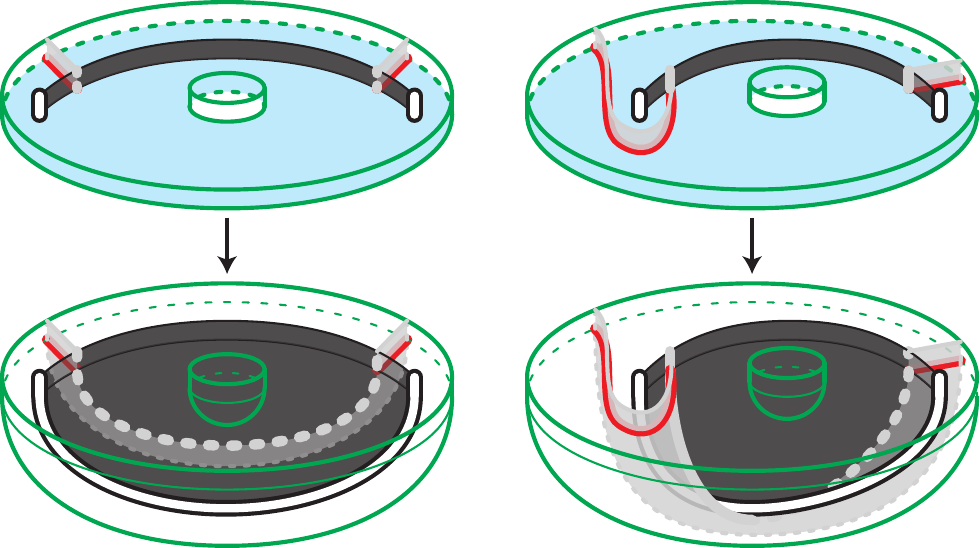}
\caption{Extending a disk $X'$ in an end-annulus for $F_1\natural F_2$ through a 2-handle to get an annulus $Y$}
\label{Fi:2HAttachEnds}
\end{center}
\end{figure}

Hence, all arcs of $N\cap A$ have one endpoint on $F$ and one on $\partial A$, and so each component of $N\cut A$ is a disk whose boundary consists of two arcs of $N\cap A$ together with an arc of $\omega\cut A$ and an arc of $\omega_+\cut A$.  There is at least one disk $X'$ of $N\cut A$ on the same side of $A$ as $F_2$.  Denote the arcs $\partial X'\cap F=\beta$, $\partial X'\cap \Sigma_+=\beta_+\subset\omega_+$, and $\partial X'\cap A=\alpha\sqcup \alpha'$ as far left in Figure \ref{Fi:XYZ}.  There are arcs $\beta'\subset F_2$ and $\beta_+'\subset\Sigma_2^+$ in the attached 2-handle such that $\alpha\cup\beta'\cup \alpha'\cup\beta_+'$ bounds a disk in that 2-handle.  Attaching this disk to $X'$ as in Figure \ref{Fi:XYZ}, left, and Figure \ref{Fi:2HAttachEnds} yields an annulus $Y\subset\Sigma_2\times I=S^2\times I$ with $\partial Y=(\beta\cup\beta')\sqcup(\beta_+\cup\beta'_+)$. Now attach a disk to $Y$ along the component $\beta_+\cup\beta_+'$ of $\partial Y$ that lies in $\Sigma_2^+$, as in Figure \ref{Fi:XYZ}, right, and push the resulting disk $Z$ into the interior of $\Sigma_2\times I$.  Since $F_2$ is $\pi_1$-essential, $Z$ cannot be a compressing disk or a $\partial$-compressing disk; instead, $Z$ must be a {\it fake} compressing disk, implying that $\beta$ is parallel in $F_2$ to $ A$, but this contradicts the minimality of $|N\cap  A|$.
\end{proof}

\begin{proof}[Proof of Theorem \ref{T:Main2}]
Let $D\subset\Sigma$ be a cellular alternating link diagram without removable nugatory crossings. If $D$ is weakly prime, then its checkerboard surfaces are end-essential, by Theorem \ref{T:Main1}. Otherwise, decompose $(\Sigma,D)=(\Sigma,D_0)=(\Sigma,D_1)\#(S^2,D'_1)$ such that $D'_1$ is a prime diagram on $S^2$. Continue in this way, decomposing $(\Sigma,D_i)=(\Sigma,D_{i+1})\#(S^2,D'_i)$, where $D'_i$ is prime on $S^2$ until some $(\Sigma,D_n)$ is weakly prime.  Then $D_n$ is cellular alternating on $\Sigma$, so its checkerboard surfaces are both end-essential by Theorem \ref{T:Main1}.  Likewise, $D'_1,\hdots,D'_{n-1}$ are all cellular alternating on $S^2$, each with at least two crossings, so all of their checkerboard surfaces are $\pi_1$-essential.  Therefore, by Proposition \ref{P:Main}, both checkerboard surfaces from $D$ are end-essential.
\end{proof}

In \cite{essence}, we use a ``twisted'' generalization of Murasugi sum to extend Theorem \ref{T:Main2} to certain state surfaces in thickened surfaces. The state surface construction is exactly as in the classical setting, but with the added restriction that every state circle is contractible on the projection surface. %, and ``adequate'' and ``homogeneously adequate'' are defined in exactly the same way.

\begin{theorem}[Theorems 8.6 and 8.7 of \cite{essence}]
Every homogeneously adequate state surface in a thickened surface is end-essential. In particular, if $D\subset \Sigma$ is a cellular alternating diagram without removable nugatory crossings, then every adequate state surface from $D$ is end-essential in $\Sigma\times I$.
\end{theorem}

\section{Split links and stabilization}

Next, we find that end-essential spanning surfaces behave fairly nicely with respect to split links and (de)stablilization.

A link $L\subset \Sigma\times I$ is {\bf split} if $L$ has a disconnected diagram on $\Sigma$, or equivalently if, for some disjoint union of circles $\gamma\subset \Sigma$, $\Sigma\setminus \gamma$ is disconnected, and $L$ can be isotoped so that it is disjoint from $\gamma\times I$  but intersects each component of $(\Sigma\times I)\setminus(\gamma\times I)$. Note that when each component of $\Sigma$ has positive genus, any link $L\subset\Sigma\times I$ with reducible complement is split, but not every split link has reducible complement.  Note that $\gamma$ may be empty here, as $\Sigma$ need not be connected.

The following result finds motivation in technical aspects of \cite{virtual}:

\begin{prop}\label{P:Split}
If an end-incompressible surface $F$ spans a split link $L\subset \Sigma\times I$, then the boundary of each connected component of $F$ lies in a single split component of $L$.
\end{prop}

\begin{proof}
Let $A=\bigsqcup_tA_t\subset \Sigma\times I$ be a disjoint union of properly embedded annuli, each with one boundary component on each of $\Sigma_\pm$, such that for each component $Y$ of $(\Sigma\times I)\cut A$, $Y\cap L$ is a nonempty nonsplit link in $\Sigma\times I$. Isotope $F$ so that it intersects $A$ transversally and minimally.  This forces $F\cap A\neq \varnothing$: otherwise, $A\cut F$ contains either a disk or an annulus whose boundary is a circle of $F\cap A$ and a component of $\partial A$, and either possibility contradicts the assumptions that $F\pitchfork A$ is minimal and $F$ is end-incompressible. The result follows.
\end{proof}

A pair $(\Sigma,L)$ is {\bf stabilized} if, for some circle 
 $\gamma\subset \Sigma$, $L$ can be isotoped so that it is disjoint from the annulus $\gamma\times I$ but intersects each component of $(\Sigma\times I)\setminus(\gamma\times I)$; one can then {\it destabilize} the pair $(\Sigma,L)$ by cutting $\Sigma\times I$ along $\gamma\times I$ and attaching two 3-dimensional 2-handles in the natural way (this may disconnect $\Sigma$). We say that this destabilization is compatible with a given spanning surface $F\subset \Sigma\times I$ if $F$ can be isotoped to be disjoint from $\gamma\times I$.  In that case, $F$ survives the operation and spans $L$ in $\Sigma'\times I$.

 The reverse operation, $(\Sigma',L)\to (\Sigma,L)$, is called {\it stabilization}: it involves choosing to points $x_0,x_1\in \Sigma'$ with neighborhoods $\nu x_i$ for which $L\cap(\nu x_i\times I)=\varnothing$, cutting out these neighborhoods, and gluing on a thickened annulus in the natural way.  %Note conversely that $(\Sigma,L)$ is {\it nonstabilized} if and only if every diagram $D$ of $L$ on $\Sigma$ is {\it cellular}, meaning that $D$ cuts $\Sigma$ into disks. 
We say that this stabilization is compatible with a given spanning surface $F\subset \Sigma'\times I$ if $F$ can be isotoped to be disjoint from both $\nu x_i\times I$.  In that case, $F$ survives the operation and spans $L$ in $\Sigma\times I$.

\begin{prop}\label{P:DestabEss}
If  $(\Sigma',L)\to (\Sigma,L)$ is a stabilization which is compatible with an end-essential spanning surface $F'\subset\Sigma'\times I$, then $F'$ is also end-essential in $\Sigma\times I$.
Conversely, if $F\subset \Sigma\times I$ is an end-essential spanning surface for $L$ and $(\Sigma\times I)\setminus L$ is irreducible, then any destabilization $(\Sigma,L)\to (\Sigma',L)$ is compatible with $F$ (but $F$ need not remain end-essential in $\Sigma'\times I$). 
\end{prop}

\begin{proof}
Write $\Sigma\times I=((\Sigma'\times I)\cut(\nu x_0\cup \nu x_1))\cup Y$, where $Y$ is the glued-on thickened annulus. Write $Y'=\partial Y\cut\partial(\Sigma\times I)$: this consists of two parallel, properly embedded annuli in $\Sigma\times I$, and the core curve of each annulus bounds a disk in $(\Sigma\times I)\cut Y$. 

If $F'$ is not end-essential in $\Sigma\times I$, then it has a compressing disk, $\partial$-compressing disk, or end-annulus. Among all the possibilities, choose one, $X$, which minimizes $|X\cap Y'|$. Since $F'$ is end-essential in $\Sigma'\times I= (\Sigma\times I)\cut Y$, $X$ must intersect $Y'$.  By standard innermost/outermost disk arguments, minimality implies that $X\cap Y'$ contains neither inessential circles nor $\partial$-parallel arcs.  Therefore, each component of $Y'$ intersects $X$ in a nonempty collection of  either parallel circles or parallel arcs, and the former is impossible, because then surgering $X$ along any annulus of $Y'\cut X$ would contradict minimality. The latter implies, however, that $\partial X$ intersects each circle of $\partial Y'$, hence intersects both disks that $\partial Y'$ cuts off from $\partial(\Sigma\times I)$, and an outermost arc of $\partial X$ in either disk yields an isotopy which contradicts minimality.

Now consider $F$.  Isotope it to minimize $|F\pitchfork Y'|$.  Then $F$ intersects $Y'$ in circles, no arcs, because $L\cap Y'=\varnothing$.  Minimality and the incompressibility of $F$ imply that no circle of $F\cap Y'$ bounds a disk in $Y'$.  An outermost annulus of $Y'\cut F$ could only be a fake end-annulus for $F$, but this would imply that each component of $\partial Y$ is inessential in $\partial (\Sigma\times I)$, hence that $(\Sigma\times I)\setminus L$ is reducible, contrary to assumption.  Therefore, $F$ is disjoint from $Y'$ and thus compatible with the destabilization.  

\begin{figure}
\begin{center}
\labellist \small \hair 4pt
\pinlabel {$\subset S^2\times I$} at 680 120
\endlabellist
\includegraphics[width=.6\textwidth]{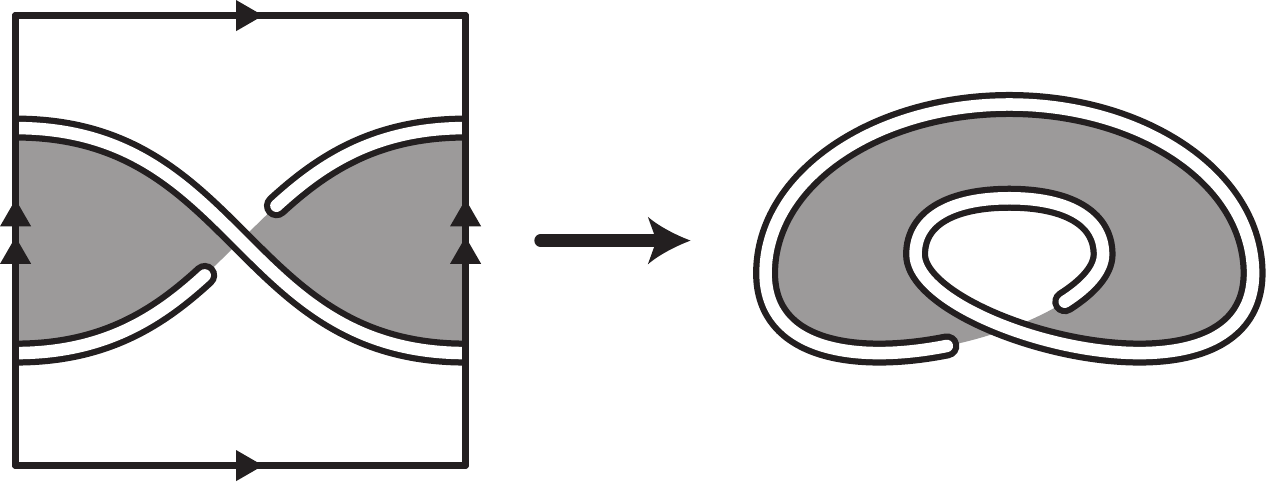}
\caption{Destabilization need not preserve end-essentiality.}
\label{Fi:DeStab}
\end{center}
\end{figure}

Yet, destabilization need not preserve end-essentiality.  For example (see Figure \ref{Fi:DeStab}, let $\gamma$ be a circle on a torus $\Sigma$, and let $F\subset \Sigma\times I$ be a M\"obius band whose core circle is $\gamma$ with a framing of $\frac{1}{2}$.  Then $F$ is end-essential in $\Sigma\times I$, but $F$ is $\partial$-compressible in the thickened 2-sphere obtained by destabilizing.  
\end{proof}

Proposition \ref{P:DestabEss} allows us to extend Theorem \ref{T:Main2} as follows to non-cellular alternating diagrams without nugatory crossings:

\begin{cor}\label{C:EndEss}
 If $M=\Sigma\times I$ is a thickened orientable surface, $D$ is an alternating link diagram on $\Sigma$ without removable nugatory crossings, and $F$ is a checkerboard surface from $D$ whose checkerboard regions are all disks, then $F$ is end-essential in $M$.
 \end{cor}

\begin{proof}
Write $\partial F=L$. If any regions of $\Sigma\cut D$ are not disks, then the associated destabilization operations are compatible with $F$.  Eventually, these operations yield some $M'=\Sigma'\times I$ such that $D$ is cellular alternating on $\Sigma'$ without nugatory crossings.  By Theorem \ref{T:Main2}, $F$ is end-essential in $M'$.  Now reverse the destabilizations. This too is compatible with $F$, so Proposition \ref{P:DestabEss} implies that $F$ is end-essential in $M$.  
\end{proof}

\section{Applications}\label{S:app}

%need to learn about Turaev surfaces for knots in thickened surfaces

Our chief motivating applications appear in \cite{virtual}, where one of the main results states that any two weakly prime, cellular alternating diagrams of a given link $L\subset\Sigma\times I$ are related by a sequence of flype moves. The idea of the proof is to use Menasco's crossing ball structures to show that the checkerboard surfaces for any two such diagrams are related by a sequence of re-plumbing moves, and that these re-plumbing moves on the checkerboard surfaces correspond to flype moves on the diagrams. The whole approach relies on the results of this paper.  See \cite{virtual} for details.

We conclude this paper with possible applications of a different sort.  The applications all pertain to questions about state surfaces from nontrivial diagrams of trivial links, especially the all-$A$ and all-$B$ state surfaces from such diagrams.  First, we will describe a curious corollary of Theorem \ref{T:Main2}.  Then we will sketch several types of inquiries in which this corollary might prove useful.

Given any connected diagram $D\subset S^2$ of a classical link $L\subset S^3$, construct the all-$A$ state surface $F_A$ and the all-$B$ state surface $F_B$ from $D$ such that near each crossing each state surface has a standard crossing band, and away from these crossing bands $F_A$ and $F_B$ lie entirely on opposite sides of $S^2$.  Then the interiors of $F_A$ and $F_B$ intersect only in vertical arcs, one at each crossing, and a regular neighborhood of $F_A\cup F_B$ is a thickened surface, whose core surface $\Sigma_D$, called the {\bf Turaev surface} of $D$ \cite{tur}, is a Heegaard surface for $S^3$ on which $D$ forms a cellular alternating diagram \cite{dfkls}.  %Conversely, given a cellular alternating diagram $D$ on a Heegaard surface $\Sigma$ for $S^3$, $\Sigma$ is a Turaev surface
We note the following consequence of Theorem \ref{T:Main2}:

\begin{cor}\label{C:Turaev}
For any connected diagram $D\subset S^2$ without nugatory crossings, denote the all-$A$ and all-$B$ state surfaces from $D$ by $F_A$ and $F_B$, so that $\nu(F_A\cup F_B)=\Sigma_D\times I$ is a regular neighborhood of the Turaev surface $\Sigma_D$ from $D$.  Then $F_A$ and $F_B$ are end-essential in $\Sigma_D\times I$. Hence, any compressing disk or $\partial$-compressing disk for, say, $F_A$ must intersect $\partial(\Sigma_D\times I)$ (the former at least twice).
\end{cor}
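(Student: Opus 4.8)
The plan is to view $D$ as a diagram on the Turaev surface $\Sigma_D$, apply Theorem \ref{T:Main2} there, and then transfer the conclusion back to $S^3$. We may assume $D$ is connected (otherwise each connected component of $D$ is again a nugatory-crossing-free diagram and we argue componentwise). By construction and by \cite{dfkls}, $D$ is a fully alternating diagram on $\Sigma_D$, the surface $\Sigma_D$ is a Heegaard surface for $S^3$, $\nu(F_A\cup F_B)=\Sigma_D\times I$, and $F_A,F_B$ are precisely the two checkerboard surfaces of $D$ on $\Sigma_D$. Thus, once we know that $D$ has no removable nugatory crossing on $\Sigma_D$, Theorem \ref{T:Main2} immediately yields that $F_A$ and $F_B$ are end-essential in $\Sigma_D\times I$.

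The crux --- and the step I expect to require the most care --- is the claim that a crossing $c$ of $D$ which is removably nugatory on $\Sigma_D$ is already nugatory on $S^2$. I would argue this via the standard, surface-independent characterization that, in a connected diagram, $c$ is nugatory on $S^2$ if and only if one of the two smoothings of $c$ disconnects the diagram: the forward direction is the usual picture, and for the reverse one places the two pieces of the smoothed diagram in disjoint disks on $S^2$ and restores $c$ in a thin band between them, whose co-core circle meets $D$ only at $c$. Suppose, then, that $c$ is removably nugatory on $\Sigma_D$, witnessed by a circle $\gamma\subset\Sigma_D$ with $\gamma\cap D=\{c\}$ that bounds a disk $X\subset\Sigma_D$. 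Exactly two of the four strand-ends at $c$ run into $X$ and two into $\Sigma_D\setminus\setminus X$, so smoothing $c$ in the way whose two resulting arcs avoid $\gamma$ produces a nonempty sub-diagram inside $X$ and a nonempty sub-diagram inside $\Sigma_D\setminus\setminus X$; that is, this smoothing disconnects $D$. Since smoothing a crossing is a local operation, this is the same (abstract) disconnected diagram one obtains by smoothing $c$ in $D\subset S^2$, so $c$ is nugatory on $S^2$. Contrapositively, if $D\subset S^2$ has no nugatory crossing, then $D\subset\Sigma_D$ has no removable nugatory crossing, and Theorem \ref{T:Main2} applies.

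For the final assertion, note that $F_A\subset\Sigma_D\times I\subset S^3$ and, by the above, $F_A$ is incompressible and end-incompressible in $\Sigma_D\times I$. Let $Y$ be a compressing disk for $F_A$ in the exterior of $L$, isotoped to meet $\partial(\Sigma_D\times I)=\Sigma_D^-\sqcup\Sigma_D^+$ transversally; since $\partial Y\subset\text{int}(\Sigma_D\times I)$, the set $Y\cap\partial(\Sigma_D\times I)$ is a disjoint union of circles in $\text{int}(Y)$. It is nonempty, for otherwise $Y\subset\Sigma_D\times I$ would be a compressing disk there, contradicting incompressibility. If it were a single circle $\delta$, say on $\Sigma_D^+$, then $\delta$ would split $Y$ into a disk bounded by $\delta$ together with an annulus $Y_1$ with $\partial Y_1=\partial Y\sqcup\delta$; since $\delta$ is the only point of $Y\cap\partial(\Sigma_D\times I)$, one checks $Y_1\subset\Sigma_D\times I$, and $Y_1\cap F_A=\partial Y$, so $Y_1$ realizes the essential curve $\partial Y\subset F_A$ as $\partial$-parallel in $\Sigma_D\times I$ --- i.e. $F_A$ is end-compressible, contradicting end-essentiality. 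Hence $|Y\pitchfork\partial(\Sigma_D\times I)|\geq 2$, and the same argument applies to $F_B$.
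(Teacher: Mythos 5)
The paper states Corollary \ref{C:Turaev} without proof, merely as a noted ``consequence'' of Theorem \ref{T:Main2}, so there is no argument in the text to compare against. Your proof takes the only sensible route --- regard $D$ as a fully alternating diagram on $\Sigma_D$ (via \cite{dfkls}), verify the hypothesis of Theorem \ref{T:Main2}, and then deduce the compressing-disk statement from end-essentiality --- and the overall structure is correct. Your treatment of the last assertion (counting circles of $Y\pitchfork\partial(\Sigma_D\times I)$) is exactly right.

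The one step that needs shoring up is your justification of the classical fact that ``some smoothing of $c$ disconnects $D$'' implies ``$c$ is nugatory in $D\subset S^2$.'' You argue by placing the two components of the smoothed diagram in disjoint disks and reinserting $c$ through a band between them. But that produces a \emph{new} embedding of the abstract 4-valent projection, which a priori need not be ambient isotopic to the given $D$, and nugatoriness is a statement about a curve in $S^2$ meeting the given embedded $D$ only at $c$. So as written, the re-embedding argument shows that \emph{some} plane realization of $D$ has a nugatory crossing at $c$, not that $D$ itself does. The conclusion is nevertheless true: since the smoothing of $c$ disconnects $D$ into $D_1\sqcup D_2$ and $D$ was connected, exactly two strand-ends of $c$ lie in each component, and the two pairs must be consecutive (not interleaved) in the cyclic order around $c$ in any planar embedding, since interleaved pairs would force $D_1$ and $D_2$ to cross. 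Then $c$ lies in the unique non-disk face of $D_1\sqcup D_2$ in $S^2$, which is an annulus $R$ separating $D_1$ from $D_2$; a core circle of $R$ can be isotoped in $R$ to pass through $c$ transversally to the original crossing strands, and this circle meets $D$ only at $c$. (Equivalently, one can phrase this via Tait graphs: $c$ is nugatory iff the corresponding edge is a bridge in one checkerboard graph, equivalently a loop in the other, and bridge/loop-ness is detected by disconnection under smoothing.) Either of these makes the step rigorous; everything else in your proof holds up.
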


\begin{rem}
Given a  {\it virtual} link diagram $V\subset S^2$, Adams et al \cite{adamsturaev} describe how to construct a Turaev surface $\Sigma_V$, often nonorientable, on which $V$ encodes a cellular alternating diagram of a link $L$ in an orientable $I$-bundle $N$ over $\Sigma_V$. %They prove that $N\setminus L$ is often hyperbolic, which has several interesting implications. 
One could use this %generalized Turaev surface 
construction to extend Corollary \ref{C:Turaev}, but at least two extra hypotheses would be needed: (1) that $\Sigma_V$ is orientable (Theorem 2.5 of \cite{adamsturaev} gives a simple condition on the Gauss code of $V$), so that this paper's results apply, and (2) that it embeds appropriately. For (2), $\Sigma_V$ rarely embeds in $S^3$, but %  Condition (1) is : first, that $\Sigma_V$ is or.  First, one must require that $\Sigma_V$ is orientable (), because we assume throughout this paper that our surface $\Sigma$ is orientable. (Do our results extend to orientable $I$-bundles over nonorientable surfaces?) Second, 
the virtual diagram $V$ has an associated diagram $D$ on a closed surface $\Sigma$, and it is more likely that $\Sigma_V$ embeds in $\Sigma\times I$. Still, it does so only if every state circle of the all-$A$ and all-$B$ states of $D$ is contractible on $\Sigma$, and there are 3-crossing cellular knot diagrams on the torus for which this is not true.
\end{rem}

Corollary \ref{C:Turaev} is particularly intriguing when $D$ is a nontrivial diagram of a trivial knot or link, especially one without nugatory crossings.  Then $F_A$ and $F_B$ are both geometrically compressible, and both admit a sequence of geometric compressions and $\partial$-compressions that terminate with the disk.  (This follows from the fact that the only geometrically incompressible spanning surfaces for the unknot are the disk, $\natural_{i=1}^n\MobPos$, and $\natural_{i=1}^n\MobNeg$, $n\in\Z^+$ \cite{jmmz}, none of which arise as such state surfaces from diagrams without nugatory crossings.)  

There are several reasons why it might be interesting to understand such sequences more explicitly. First, any algorithm for obtaining such a sequence would be an unknotting algorithm and thus would detect the unknot.  

Second, Kronheimer--Mrowka used a spectral sequence to prove that Khovanov homology detects the unknot \cite{km}, but there is no elementary proof of that fact, and it remains an open question whether the Jones polynomial detects the unknot.  {\it How} exactly does (almost) everything cancel when one computes the Khovanov homology or Jones polynomial of the unknot from a complicated diagram?  Perhaps a Turaev surface approach as in \cite{dl}, especially with a new geometric perspective, could gain new insight here.

Third, every 2-knot $K$ in $S^4$ admits a {\it bridge trisection}, which describes $K$ via three trivial $2n$-tangles $T_i$, $i\in\Z/3$, in a 3-ball $B_i$, such that each pairwise union $T_i\cup T_{i+1}=L_i$ is a trivial link in the 3-sphere $S_i=B_i\cup B_{i+1}$ \cite{jmmz}.  The triple intersection of the three 3-balls is a 2-sphere $\Sigma$, and cutting $S^4$ along the union of the 3-balls gives three 4-balls $X_i$, where each $\partial X_i=S_i$.  Each $L_i$ bounds a disjoint union $K_i$ of disks, which is unique up to isotopy in $X_i$, and $K=\bigcup K_i$.  A {\it tri-plane} diagram $D=(D_0,D_1,D_2)$ consists of a tangle diagram $D_i$ for each tangle $T_i$.  

If $K$ has normal euler number 0, then one can use $D$ to construct a Seifert solid for $K$ as follows.  Each $D_i\cup D_{i+1}$ is a diagram of an unlink.  Perform Seifert's algorithm on these diagrams (in a compatible way), giving a Seifert surface $F_i$ for each unlink $L_i$.  Take a collar neighborhood $\nu S_i\equiv S_i\times[0,1]$ of $S_i\equiv S_i\times\{0\}$ in $X_i$, and find a sequence of compressions to reduce each $F_i$ to a disjoint union of disks.  Each sequence describes a %(2+1)-dimensional 
cobordism with boundary in $\nu S_i$ from $F_i\subset S_i\times\{0\}$ to a union of disks in $S_i\times\{1\}$, and the boundary of this cobordism is $F_i\cup K_i$.  The union of the three cobordisms is thus a Seifert solid for $K$, an orientable 3-manifold embedded in $S^4$ with boundary equal to $K$.  See \cite{jmmz} for details. 

If, on each $D_i\cup D_{i+1}$, instead of performing Seifert's algorithm, one instead constructs an arbitrary state surface (again in a compatible way), then nearly the same construction yields a spanning solid for $K$, a 3-manifold (not necessarily orientable) embedded in $S^4$ with boundary equal to $K$. The only difference is that the sequence of moves reducing each $F_i$ to a disjoint union of disks may involve compression {\it and $\partial$-compression.} This construction also works for any surface-link $K\subset S^4$ with normal euler number 0.  The hardest part of both constructions (given a bridge trisection) is finding the sequence of compression (and $\partial$-compression) moves, so again a new geometric perspective here might be useful.

Finally, there are interesting questions one can ask that are completely ``internal'' to the inquiry suggested by Corollary \ref{C:Turaev} (in the case where $D$ is a nontrivial diagram of the unknot without nugatory crossings). Then $F_A$ and $F_B$ are compressible in $S^3\supset\Sigma_D\times I$, but given any compressing disk $X$ for either surface, $X\pitchfork \partial(\Sigma_D\times I)$ is a disconnected multicurve $\omega$ on $\partial(\Sigma_D\times I)$; it is reasonable to require further that (each component of) $\omega$ be essential, and no two components of $\omega$ be parallel, in $\partial(\Sigma_D\times I)$.  What are the possibilities for this multicurve $\omega$? 

In particular, let us denote 
\[\Omega_A(D)=\left\{\omega=X\pitchfork \partial(\Sigma_D\times I)~\left|~\begin{matrix}
X\text{ is a compressing disk for }F_A,\\
\omega\text{ cuts off no disk or annulus}\\
\text{ from }\partial(\Sigma_D\times I)\end{matrix}\right.\right\},\]
and
\[n_A(D)=\min_{\omega\in\Omega_A(D)}|\omega|,\]
and likewise denote $\Omega_B(D)$ and $n_B$. With this setup, we ask:

\begin{question}
How can the sets $\Omega_A(D)$ and $\Omega_B(D)$ relate to each other (depending on $D$ and the link it represents)?
\end{question}

\begin{question}
How do the quantities $|\Omega_A(D)|$, $n_A(D)$ and $\beta_1(F_A)$ relate to each other? 
\end{question}

\begin{question}
How do $|\Omega_A(D)|$, $n_A(D)$, $|\Omega_B(D)|$, and $n_B(D)$ relate to the number of crossings in $D$ and the genus of the Turaev surface? 
\end{question}

Related fundamental questions include:

\begin{question}
If $F$ arises via Seifert's algorithm on a diagram $D$ of a trivial link $L$, must $F$ admit a compression $F\to F'$ such that $F'$ also arises via Seifert's algorithm on a diagram $D'$ of $L$? What if $F$ is an arbitrary state surface from $D$?
\end{question}

\begin{question}
Is every free spanning surface for a trivial link $L$ isotopic to a state surface of some diagram of $L$?
In particular, does every free Seifert surface for a trivial link $L$ arise via Seifert's algorithm?
\end{question}

\end{document}